\title{\textbf{On the $ICPC$-property of finite subgroups}\thanks{\footnotesize  \scriptsize\emph{E-mail addresses:}
      zsmcau@cau.edu.cn\,(S. Zhang).}}
\author{Shengmin Zhang\\
\quad
\\
{\small College of Science,
China Agricultural University,
Beijing 100083, China}}
\date{}
\newtheorem{theorem}{Theorem}[section]
\newtheorem{lemma}[theorem]{Lemma}
\newtheorem{corollary}[theorem]{Corollary}
\theoremstyle{definition}
\newtheorem{definition}[theorem]{Definition}
\let\expandafter\oldproof\csname\string\proof\endcsname
\let\oldendproof\endproof
\renewenvironment{proof}[1][\proofname]{%
  \oldproof[\bfseries\scshape #1]%
}{\oldendproof}
\def\trianglelefteqslant{\ThisStyle{\mathrel{%
  \stackinset{r}{.75pt+.15\LMpt}{t}{.1\LMpt}{\rule{.3pt}{1.1\LMex+.2ex}}{\SavedStyle\leqslant}%
}}}
\renewcommand{\unlhd}{\trianglelefteqslant}
\renewcommand{\leq}{\leqslant}
\renewcommand{\geq}{\geqslant}
\begin{document}
\maketitle
\begin{abstract}
Let $G$ be a finite group and $A$ be a subgroup of $G$. Then $A$ is called a $p$-$CAP$-subgroup of $G$, if $A$ covers or avoids every $pd$-chief factor of $G$. A subgroup $H$ of $G$ is said to be an $ICPC$-subgroup of $G$, if $H \cap [H,G] \leq H_{pcG}$, where $H_{pcG}$ is a $p$-$CAP$-subgroup of $G$ contained in $H$. In this paper, we investigate the structure of $G$ under the assumption that certain subgroups are $ICPC$-subgroups of $G$, and characterization of $p$-nilpotency and other results are obtained.
\end{abstract}
\section{Introduction}
All groups in this paper are finite, and $G$ always denotes a group. Let $H$ be a subgroup of $G$. Then $[H,G]$ denotes the commutator group of $H$ and $G$, and $H^G$ denotes the normal closure of $G$ in $H$, i.e. the smallest normal subgroup of $G$ which contains $H$. It is obvious that $[H,G]$ is normal in $G$, and the product of $H$ and $[H,G]$ is a group, with the fact that $H [H,G] =H^G$. Since $H^G$ and $[H,G]$ are closely related to the structure of $G$, it is interesting to consider the intersection of $H$ and $[H,G]$. In fact, we can restrict the intersection of $H$ and $[H,G]$ into certain subgroups of $H$. For instance, in {{\cite{GL}}}, Y. Gao and X. Li considered the $IC \Phi$-subgroups of $G$. A subgroup $H$ of $G$ is called an $IC \Phi$-subgroup of $G$, if $H \cap [H,G] \leq \Phi (H)$. In their paper, some characterizations of $p$-nilpotency and supersolvability were obtained under the assumption that certain subgroups of $G$ are $IC \Phi$-subgroups of $G$. J. Kaspczyk  studied in {{\cite{KA}}} the properties of $IC \Phi$-subgroups as well, and concluded characterization of $p$-nilpotency under the assumption that certain subgroups of $G$ of fixed order are $IC \Phi$-subgroups of $G$. In 2022, Y. Gao and X. Li {{\cite{GL2}}} generalised the concept of $IC \Phi$-subgroup and introduced the following definition: A subgroup $H$ of $G$ is said to be an $IC \Phi s$-subgroup of $G$, if $(H \cap [H,G])H_G /H_G \leq \Phi (H/H_G) H_{sG}/H_G$, where $H_G$ denotes the largest normal subgroup of $G$ contained in $H$, and $H_{sG}$ denotes the unique maximal subgroup of $H$ which is $s$-permutable in $G$. Some characterizations of $p$-nilpotency and solvably saturated formation containing $\mathfrak{U}$ were obtained. Recall that a saturated formation $\mathfrak{F}$ is said to be solvably saturated, if $G \in \mathfrak{F}$ whenever $G / \Phi^{*} (G) \in \mathfrak{F}$. This concept was first introduced by W. Guo and A. N. Skiba {{\cite{GS2}}}. In {{\cite{KA2}}}, J. Kaspczyk considered further properties of $IC \Phi$-subgroups, and obtained some more generalised results.

In 2023, Y. Gao and X. Li combined the definition of $IC$ property and semi $CAP$-subgroup {{\cite{GL3}}}. Recall that a subgroup $H$ is called a semi-$CAP$-subgroup of $G$, if there exists a chief series $\Gamma$ of $G$ such that $H$ covers or avoids every chief factor in $\Gamma$. In {{\cite{GL3}}}, they introduced the following definition.
\begin{definition} 
$A$ subgroup $H$ of a group $G$ is called an $ICSC$-subgroup of $G$, if $H \cap [H, G] \leq  H_{scG}$, where $H_{scG}$ is a semi-$CAP$-subgroup of $G$ contained in $H$.
\end{definition}
In their paper, they gave the following theorems.
\begin{theorem}[{{\cite[Theorem 3.1]{GL3}}}]\label{00001}
Let $G$ be a group and $P$ a normal $p$-subgroup of $G$. Assume that every cyclic subgroup of $P$ of order $p$ and $4$ (if $P$ is a non-abelian $2$-group) either is an $ICSC$-subgroup of $G$ or has a supersolvable supplement in $G$. Then $P \leq Z_{\mathfrak{U}} (G)$. 
\end{theorem}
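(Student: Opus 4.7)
The plan is to proceed by a minimal counterexample $(G,P)$ with $|G|+|P|$ as small as possible. The first step is to reduce to the case that $P$ is a minimal normal subgroup of $G$. Pick a minimal normal subgroup $N$ of $G$ with $N\le P$: the hypothesis is inherited by $(G,N)$ automatically, since every cyclic subgroup of $N$ of order $p$ (or $4$) is already a cyclic subgroup of $P$ of the same order; and it descends to $(G/N,P/N)$ by the standard facts that images of semi-$CAP$-subgroups in a quotient are again semi-$CAP$ (so the $ICSC$-property passes to quotients) and that images of supersolvable supplements are supersolvable supplements. Minimality then yields $N\le Z_{\mathfrak U}(G)$ and $P/N\le Z_{\mathfrak U}(G/N)$, whence $P\le Z_{\mathfrak U}(G)$, against the choice of counterexample. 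Hence $P$ is minimal normal, elementary abelian, and $|P|>p$ (else $P/1$ is a cyclic $G$-chief factor).

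Fix $1\ne x\in P$ and let $X=\langle x\rangle$, of order $p$. I claim that no nontrivial semi-$CAP$-subgroup of $G$ lies inside $X$. Indeed, in any chief series $1=G_0<G_1<\cdots<G_n=G$, minimal normality of $P$ forces a unique index $i$ with $P\cap G_i=1$ and $P\le G_{i+1}$, and a direct check then shows that a subgroup $S\le P$ covers $G_{i+1}/G_i$ iff $|S|\ge|P|$ and avoids it iff $S=1$. As $|X|=p<|P|$, only $S=1$ is possible, so every admissible $X_{scG}$ is trivial. Therefore, if $X$ is an $ICSC$-subgroup of $G$, then $X\cap[X,G]=1$, so $X^G=X\cdot[X,G]$ is an internal direct product inside the minimal normal subgroup $P$; hence $X^G=P$, and applying the same dichotomy to $[X,G]\unlhd G$ gives $[X,G]\in\{1,P\}$, both of which contradict either $|P|>p$ or $X\cap[X,G]=1$. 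Thus every such $X$ admits a supersolvable supplement $K$ in $G$: $G=XK$ with $K$ supersolvable.

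Now $|G:K|=|X:X\cap K|\le p$, so $|P:P\cap K|\le p$. If $P\le K$ then $G=K$ is supersolvable and every $G$-chief factor is cyclic, so $P\le Z_{\mathfrak U}(G)$, a contradiction. Otherwise $P\cap K$ is a maximal subgroup of $P$, normal in $K$ (since $P\unlhd G$) and stabilised by $X\le P$ (since $P$ is abelian), hence normal in $G=XK$. Minimal normality of $P$ forces $P\cap K=1$, and so $|P|=p$, the final contradiction.

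The main technical obstacle is the quotient reduction in the first paragraph: verifying that both alternatives in the hypothesis descend to $(G/N,P/N)$ rests on a standard but slightly delicate lemma about semi-$CAP$-subgroups in quotients, and the bookkeeping for order-$4$ cyclic subgroups when $p=2$ and $P$ is non-abelian must be tracked carefully (e.g.\ when $N$ has order $2$ and collapses the order of a relevant lift). Once the reduction to minimal normal $P$ is in place, the remainder is essentially forced by the elementary chief-series computation of the second paragraph and the supersolvable-supplement calculation of the third.
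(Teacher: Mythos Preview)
Your reduction to the case ``$P$ minimal normal in $G$'' is where the argument breaks. You claim that the hypothesis descends to $(G/N,P/N)$, but neither half of this is routine. First, a cyclic subgroup $\bar H\le P/N$ of order $p$ need not lift to a cyclic subgroup of $P$ of order $p$ (or $4$): if $\bar H=\langle xN\rangle$ with $x^{p}\in N\setminus\{1\}$ and $P$ is abelian, then every $y\in xN$ satisfies $y^{p}=x^{p}\ne 1$, so no order-$p$ preimage exists. Thus you have no hypothesis to apply to $\bar H$. Second, even when a suitable lift $H$ exists, the $ICSC$ condition $H\cap[H,G]\le H_{scG}$ does not obviously yield $(HN/N)\cap[HN/N,G/N]\le H_{scG}N/N$: unwinding gives $(HN/N)\cap[HN/N,G/N]=(H\cap[H,G]N)N/N$, and $H\cap[H,G]N$ can be strictly larger than $H\cap[H,G]$ when $N$ is a $p$-group. (This is exactly why the quotient lemma in the present paper, Lemma~2.2 for the $p$-$CAP$ version, needs the coprimeness $(|H|,|N|)=1$.) You flag this step as ``the main technical obstacle,'' but it is not a bookkeeping issue; it is a genuine obstruction, and your argument as written depends on it.

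The paper's proof of the analogous $ICPC$ statement (Theorem~\ref{3.1}) sidesteps the quotient problem entirely. Instead of going to $G/N$, it works from the top: it takes $K\unlhd G$ maximal with $P/K$ a $G$-chief factor, observes that the hypothesis is trivially inherited by any $G$-normal subgroup of $P$ (so $K\le Z_{\mathfrak U}(G)$ by minimality of $|P|$), shows $K$ is the unique maximal $G$-normal subgroup of $P$ and $|P/K|>p$, and then uses a Thompson critical subgroup $C$ of $P$ together with Lemmas~\ref{3} and~\ref{4} to force $\Omega(C)=P$, hence $\exp(P)\in\{p,4\}$. Only after this exponent reduction does it pick a specific $x\in P$ with $\langle x\rangle K/K$ central of order $p$ in $P/K$ and run the cover--avoid and supplement dichotomy on $\langle x\rangle$. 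Your second and third paragraphs (the semi-$CAP$ cover/avoid dichotomy inside $P$, and the supersolvable-supplement endgame via $P\cap K\unlhd G$) are correct and close in spirit to what the paper does at that stage; the missing ingredient is a legitimate route to ``$P$ has bounded exponent and a unique maximal $G$-chief factor at the top,'' which the Thompson-critical-subgroup machinery supplies without any passage to $G/N$.
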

\begin{theorem}[{{\cite[Theorem 3.2]{GL3}}}]\label{00002}
Let $E$ be a normal subgroup of $G$ and $P$ a Sylow $p$-subgroup of $E$ with $p = \min \pi(E)$. Assume that every cyclic subgroup of $P$ of order $p$ and $4$ (if $P$ is a non-abelian $2$-group) is an $ICSC$-subgroup of $G$, then $E$ is $p$-nilpotent.
\end{theorem}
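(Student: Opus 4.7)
The plan is to argue by contradiction via a minimal counterexample: assume the theorem fails and choose a pair $(G,E)$ with $|G|+|E|$ minimal among all pairs satisfying the hypothesis for which $E$ is not $p$-nilpotent. The structure of the argument will mirror the standard minimal-counterexample strategy used throughout the literature on $p$-nilpotency: first establish that the $ICSC$-property behaves well under passage to subgroups and quotients of $G$ that contain a given cyclic subgroup, then successively eliminate possible structures until a contradiction is reached via Theorem \ref{00001}.

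The first block of reductions should use that semi-$CAP$-subgroups push forward along quotient maps, so that if $N \unlhd G$ with $N \le E$ and $L \le P$ is a cyclic $ICSC$-subgroup of $G$ of order $p$ or $4$, then $LN/N$ is an $ICSC$-subgroup of $G/N$ with respect to the Sylow $p$-subgroup $PN/N$ of $E/N$. Using this together with the assumption $p = \min\pi(E)$ (which is preserved in $E/N$ when $N$ is a $p$-group), the minimality of $(G,E)$ forces: (i) $O_{p'}(E)=1$; (ii) every proper $G$-invariant subgroup of $E$ containing $P$ is $p$-nilpotent, and hence equals $P$ by (i); (iii) any minimal normal subgroup $N$ of $G$ contained in $E$ must be a $p$-group, for otherwise $p\nmid |N|$ (since $p=\min\pi(E)$ and $N$ is a direct product of isomorphic simple groups) would give $N\le O_{p'}(E)=1$.

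With a minimal normal $p$-subgroup $N$ of $G$ inside $E$ in hand, the hypotheses of Theorem \ref{00001} apply to $N$: every cyclic subgroup of $N$ of order $p$ (and order $4$ in the non-abelian $2$-case) is an $ICSC$-subgroup of $G$ by assumption. Hence $N \le Z_{\mathfrak{U}}(G)$. Since $N$ is a minimal normal subgroup of $G$, this forces $|N|=p$. The quotient $(G/N, E/N)$ still satisfies the hypotheses (by the lifting lemma above, and because $p$ remains the smallest prime in $\pi(E/N)$), so by minimality $E/N$ is $p$-nilpotent. Writing $K/N$ for the normal $p$-complement of $E/N$, we obtain an extension of $N$ (of order $p$) by a $p'$-group $K/N$.

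The main obstacle, and where the $p=\min\pi(E)$ hypothesis is crucial, lies in closing the argument: one must show that $K$ splits over $N$, producing a normal $p$-complement of $E$ and the desired contradiction. This is exactly the setting of the Schur--Zassenhaus theorem together with a coprimality/minimality argument on the action of $K/N$ on $N$; since $|N|=p$ and $p$ is the smallest prime dividing $|K|$, the automorphism group of $N$ has order $p-1$ coprime to $|K/N|$, so the action is trivial, $N \le Z(K)$, and Burnside's normal $p$-complement theorem applied inside $K$ (or directly Tate's theorem) yields a normal $p$-complement of $E$, contradicting the choice of $(G,E)$. The delicate bookkeeping will be ensuring the $ICSC$-hypothesis genuinely transfers in all reduction steps, particularly when $P$ is a non-abelian $2$-group and cyclic subgroups of order $4$ come into play, since the lifting of the order-$4$ clause across quotients requires a separate verification.
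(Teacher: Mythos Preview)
Your proposal has a genuine gap at the central reduction step. You assert that once $N$ is a minimal normal $p$-subgroup of $G$ inside $E$ with $|N|=p$, the pair $(G/N,E/N)$ again satisfies the hypothesis. But a cyclic subgroup of $PN/N$ of order $p$ has the form $\langle xN\rangle$ with $x\in P$ and $x^{p}\in N$; if $x^{p}\neq 1$ then $\langle x\rangle$ is cyclic of order $p^{2}$, and the hypothesis tells you nothing about such subgroups. (Concretely, if $x^{p}$ generates $N$ then every element of the coset $xN$ has order $p^{2}$, so no order-$p$ lift exists.) The same obstruction hits the order-$4$ clause. This is not ``delicate bookkeeping'' but the structural reason why arguments of this type never quotient by normal $p$-subgroups; the paper's proof of the $ICPC$ analogue, Theorem~\ref{10002}, accordingly never forms $G/N$ for a $p$-group $N$.

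There is a second gap in your step (iii). The justification ``for otherwise $p\nmid|N|$'' is simply false: a minimal normal subgroup of $G$ contained in $E$ may well be a direct product of non-abelian simple groups of order divisible by $p$. When $N<E$ one can recover the conclusion by applying minimality to $(G,N)$ and using $O_{p'}(E)=1$, but the case $N=E$---that is, $E$ itself a non-abelian chief factor of $G$---is nowhere addressed in your outline, and there no normal $p$-subgroup is available. The paper (again for Theorem~\ref{10002}) treats this case directly: for $x\in P$ of order $p$ one shows $[\langle x\rangle,G]=E$, whence $\langle x\rangle$ itself has the cover--avoidance property and must cover or avoid the chief factor $E/1$, both alternatives being absurd. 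In the complementary situation $O_{p}(E)>1$ the paper does not quotient either; it instead passes to a minimal non-$p$-nilpotent (Schmidt) subgroup $K=K_{p}\rtimes K_{q}$ of a suitable section, where $\exp(K_{p})\in\{p,4\}$ guarantees that the hypothesis applies to every $x\in K_{p}\setminus\Phi(K_{p})$, and derives the contradiction from the cover--avoidance behaviour of $\langle x\rangle$ on the chief factor above $O_{p}(E)$.
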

The $CAP$-property has many generalisations. We can change the chief factors the subgroups cover or avoid, and get different generalisations of $CAP$-property. Z. Gao {\it et al.} in {{\cite{GQ}}} introduced a version of generalisation of $CAP$-property as follows.
\begin{definition} 
A subgroup $A$ of a group $G$ is called a $p$-$CAP$-subgroup of $G$ if $A$ covers or avoids every $pd$-chief factor of $G$, where $pd$-chief factor denotes the chief factor of $G$ with order divided by $p$.
\end{definition}
Then a natural way to generalise the $ICPC$-subgroup comes to our mind.   Actually, we generalise the definition of $ICPC$-subgroup by changing the restriction of the intersection of $H$ and $[H,G]$ from $H_{scG}$ into $H_{pcG}$, where $H_{pcG}$ denotes a $p$-$CAP$-subgroup of $G$ contained in $H$. Thus we have the following definition.
\begin{definition}
A subgroup $A$ of $G$ is said to be an $ICPC$-subgroup of $G$, if $A \cap [A,G] \leq A_{pcG}$.
\end{definition}
In this paper, we investigate the basic properties of $ICPC$-subgroup, and obtained some characterizations of solvably saturated formation and  $p$-nilpotency under the assumption that certain subgroups of $G$ are $ICPC$-subgroups of $G$ as generalisations of Theorem \ref{00001} and Theorem \ref{00002}. Our main results are listed as follows.
\begin{theorem}\label{3.1}
Let $G$ be a finite group and $P$ be a normal $p$-group of $G$. Suppose that any cyclic subgroup of $P$ of order $p$ and $4$ (if $P$ is a non-abelian $2$-group) either is an $ICPC$-subgroup of $G$ or has a supersolvable supplement in $G$. Then $P \leq Z_{\mathfrak{U}} (G)$. 
\end{theorem}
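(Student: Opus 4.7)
The plan is a minimal counterexample argument. Assume the theorem fails, and let $(G,P)$ violate the conclusion with $|G|+|P|$ minimal. A short companion lemma on the quotient behaviour of $p$-$CAP$-subgroups, commutators, and supersolvable supplements shows the hypothesis passes from $(G,P)$ to $(G/N,P/N)$ for any minimal normal subgroup $N\le P$ of $G$. Standard induction then forces $G$ to contain a unique minimal normal subgroup $N\le P$ with $P/N\le Z_{\mathfrak U}(G/N)$, with $N$ elementary abelian of order $p^n$, $n\ge 2$, and $N\not\le Z_{\mathfrak U}(G)$; the exceptional clause about cyclic subgroups of order $4$ in non-abelian $2$-groups is vacuous for elements of the abelian $N$, so only cyclic subgroups of $N$ of order $p$ need be analysed.

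Fix such a cyclic subgroup $\langle x\rangle\le N$ of order $p$. Testing the $p$-$CAP$-condition on the $pd$-chief factor $N/1$ shows $\langle x\rangle$ cannot itself be a $p$-$CAP$-subgroup of $G$ (covering would force $N\le\langle x\rangle$ and avoiding would force $\langle x\rangle\cap N=1$, both absurd); hence $\langle x\rangle_{pcG}=1$, and the $ICPC$-clause on $\langle x\rangle$ reduces to $\langle x\rangle\cap[\langle x\rangle,G]=1$. The hypothesis now splits into two clean sub-cases. In the supersolvable-supplement sub-case, $G=\langle x\rangle K$ with $K$ supersolvable: if $x\in K$ then $G=K$ is supersolvable and $P\le Z_{\mathfrak U}(G)$, a contradiction; otherwise $\langle x\rangle\cap K=1$, $|G:K|=p$, and $K\cap N$ is normalised by both $K$ and $N$, hence by $KN=G$, giving $K\cap N\trianglelefteq G$; the minimality of $N$ then forces $K\cap N=N$ (placing $x\in K$, a contradiction) or $K\cap N=1$ (forcing $|N|\le p$, a contradiction). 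In the $ICPC$ sub-case, I would write $N$ additively as an irreducible $\mathbb{F}_pG$-module, verify that $V_x:=\langle(g-1)x:g\in G\rangle$ is a $G$-submodule (from $h(g-1)x=(hg-1)x-(h-1)x$), and observe that $V_x\cap\langle x\rangle=0$; irreducibility then forces $V_x=0$, so every non-zero $x\in N$ is $G$-fixed, $G$ acts trivially on the irreducible module $N$, and thus $|N|=p$, contradicting $|N|>p$.

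The principal obstacle will be the descent lemma in the first paragraph: verifying that the $ICPC$-clause on $\langle x\rangle\le P$ induces the $ICPC$-clause on the corresponding cyclic subgroup of order $p$ in $P/N$ inside $G/N$, given the interaction of $[H,G]$, $H_{pcG}$, and $N$ under quotients. Once that is secured, the module-theoretic contradiction in the $ICPC$ sub-case and the normal-subgroup manipulation in the supplement sub-case are both short and self-contained.
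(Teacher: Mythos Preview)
Your endgame is sound: once you have a unique minimal normal subgroup $N\le P$ of $G$ with $|N|>p$, both the supersolvable-supplement sub-case and the module-theoretic $ICPC$ sub-case produce clean contradictions exactly as you outline. The problem lies entirely in the descent lemma you flag, and it is not a mere technicality but a genuine obstruction on two levels.

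First, a cyclic subgroup of order $p$ in $P/N$ need not arise from a cyclic subgroup of order $p$ in $P$. If $\bar y\in P/N$ has order $p$, every preimage $y\in P$ satisfies $y^p\in N$, but $o(y)$ may well equal $p^2$ (take $P=\mathbb{Z}_{p^2}\times\mathbb{Z}_p$, $N=p\mathbb{Z}_{p^2}\times 0$, and $\bar y=(1,0)+N$: every preimage has order $p^2$). In that situation the hypothesis of the theorem says nothing about $\langle y\rangle$, so there is nothing to descend. Second, even when a lift of order $p$ exists, the paper's quotient lemma for the $ICPC$ property (its Lemma~2.2) requires $(|H|,|N|)=1$, and there is no evident analogue when $N$ is a $p$-group; the intersection $\langle y\rangle N\cap[\langle y\rangle,G]N$ does not reduce to $(\langle y\rangle\cap[\langle y\rangle,G])N$ in general.

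The paper sidesteps both difficulties by inducting in the opposite direction: instead of passing to $(G/N,P/N)$, it passes to $(G,K)$ for a maximal $G$-normal subgroup $K<P$. Since every cyclic subgroup of $K$ of order $p$ or $4$ is already a cyclic subgroup of $P$, the hypothesis is inherited verbatim with no quotient lemma needed. Minimality then gives $K\le Z_{\mathfrak U}(G)$, forces $K$ to be the unique maximal $G$-normal proper subgroup of $P$, and reduces to showing $|P/K|=p$. To make the top chief factor $P/K$ accessible via elements of order $p$ or $4$, the paper invokes a Thompson critical subgroup $C$ of $P$: either $\Omega(C)<P$, whence $\Omega(C)\le K\le Z_{\mathfrak U}(G)$ and Lemma~\ref{4} finishes, or $\Omega(C)=P$, forcing $\exp(P)=p$ (or $4$). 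Now one picks $x\in P\setminus K$ with $\langle x\rangle K/K$ central of order $p$ in a Sylow $p$-subgroup of $G/K$ and runs the two sub-cases against the chief factor $P/K$, very much in the spirit of your endgame but at the top rather than the bottom. The downward induction is what makes the argument go through without any delicate descent.
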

\begin{corollary}\label{10001}
Let $\mathfrak{F}$ be a solvably saturated formation containing $\mathfrak{U}$ and $N$ a solvable normal subgroup of $G$ such that $G/N \in \mathfrak{F}$. Assume that every cyclic subgroup of every non-cyclic Sylow subgroup $P$ of $F(N)$ of prime order or $4$ (if $P$ is a non-abelian $2$-group) either is an $ICPC$-subgroup of $G$ or has a supersolvable supplement in $G$. Then $G \in \mathfrak{F}$.
\end{corollary}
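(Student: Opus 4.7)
The plan is to deduce this corollary from Theorem \ref{3.1} by first establishing $F(N) \leq Z_{\mathfrak{U}}(G)$ and then invoking a standard criterion for solvably saturated formations containing $\mathfrak{U}$. To begin, since $F(N)$ is characteristic in $N$ and $N \trianglelefteq G$, we have $F(N) \trianglelefteq G$. Because $F(N)$ is nilpotent, $F(N) = P_1 \times \cdots \times P_r$, where each Sylow subgroup $P_i$ of $F(N)$ is characteristic in $F(N)$ and hence normal in $G$. Thus it suffices to show $P_i \leq Z_{\mathfrak{U}}(G)$ for every $i$.

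For each non-cyclic $P_i$, the hypothesis provides exactly the setup of Theorem \ref{3.1} applied to the normal $p_i$-subgroup $P_i$ of $G$, which yields $P_i \leq Z_{\mathfrak{U}}(G)$. For each cyclic $P_i$, the hypothesis is vacuous, but the conclusion is still automatic: any chief factor $A/B$ of $G$ with $B \leq A \leq P_i$ is a section of a cyclic $p_i$-group, hence cyclic of prime order, and therefore $\mathfrak{U}$-central. Refining a chief series of $G$ through $P_i$ then gives $P_i \leq Z_{\mathfrak{U}}(G)$. Combining the two cases, $F(N) \leq Z_{\mathfrak{U}}(G)$.

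Since $N$ is solvable, the generalized Fitting subgroup satisfies $F^*(N) = F(N) \leq Z_{\mathfrak{U}}(G) \leq Z_{\mathfrak{F}}(G)$, where the last inclusion holds because $\mathfrak{U} \subseteq \mathfrak{F}$. Combined with $G/N \in \mathfrak{F}$, this matches the hypothesis of the standard formation-theoretic fact that for any solvably saturated formation $\mathfrak{F}$ containing $\mathfrak{U}$, if $N \trianglelefteq G$ is solvable with $G/N \in \mathfrak{F}$ and $F^*(N) \leq Z_{\mathfrak{F}}(G)$, then $G \in \mathfrak{F}$. The main point requiring care is the cyclic Sylow case, which lies outside the scope of Theorem \ref{3.1} and must be dispatched via the chief-factor observation above; the closing step is standard and, if it is not cited directly, would be proved by a minimal counterexample argument that exploits the solvably saturated property of $\mathfrak{F}$ together with the centralizer-bound $C_N(F(N)) \leq F(N)$ for solvable $N$.
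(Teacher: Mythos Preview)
Your argument is correct and follows essentially the same route as the paper: show each Sylow subgroup of $F(N)$ lies in $Z_{\mathfrak{U}}(G)$ (via Theorem~\ref{3.1} for the non-cyclic ones and the direct chief-factor observation for the cyclic ones), use $F^{*}(N)=F(N)$ by solvability, and then finish via formation-theoretic facts. The only notable difference is in the final step: the paper explicitly splits your ``standard fact'' into two citations, first applying Lemma~\ref{5} (Skiba) to pass from $F^{*}(N)\leq Z_{\mathfrak{U}}(G)$ to $N\leq Z_{\mathfrak{U}}(G)$, and then Lemma~\ref{6} to conclude $G\in\mathfrak{U}\subseteq\mathfrak{F}$; in particular the paper obtains the slightly stronger conclusion $G\in\mathfrak{U}$, whereas you pass through $Z_{\mathfrak{F}}(G)$ and only claim $G\in\mathfrak{F}$.
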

\begin{theorem}\label{10002}
Let $G$ be a finite group, $E$ be a normal subgroup of $G$, and $P$ be a Sylow $p$-subgroup of $E$, where $p = \min \pi(E)$. Assume that every cyclic subgroup of $P$ of order $p$ and $4$ (if $P$ is a non-abelian $2$-group) is an $ICPC$-subgroup of $G$, then $E$ is $p$-nilpotent.
\end{theorem}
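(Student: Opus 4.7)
My plan is to argue by minimal counterexample: take $(G,E)$ with $|G|+|E|$ minimal among pairs satisfying the hypothesis but violating the conclusion, and use the basic preparatory lemmas on $ICPC$-subgroups (stability under $G$-conjugation and, in favorable quotient situations, descent to $G/N$) that Section 2 of the paper should establish. The first standard step is to reduce to $O_{p'}(E)=1$: if $N := O_{p'}(E) \ne 1$, then $PN/N \cong P$ is a Sylow $p$-subgroup of $E/N$, the hypothesis descends to $(G/N, E/N)$, and minimality yields $E/N$ is $p$-nilpotent, whence $E$ is $p$-nilpotent, a contradiction.

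Next I invoke Theorem \ref{3.1} on the normal $p$-subgroup $O_p(E)$ of $G$: every cyclic subgroup of $O_p(E)$ of order $p$ or $4$ is $G$-conjugate to one lying in $P$ and hence is an $ICPC$-subgroup of $G$, so Theorem \ref{3.1} (whose hypothesis is strictly weaker) gives $O_p(E) \le Z_{\mathfrak U}(G)$. If $O_p(E) \ne 1$, pick a minimal normal subgroup $N$ of $G$ inside $O_p(E)$; then $|N|=p$, $P \le C_G(N)$, and since $p = \min \pi(E)$ the quotient $E/C_E(N)$ is a $p'$-group of order dividing $p-1$, hence trivial, giving $N \le Z(E)$. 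Passing to $(G/N, E/N)$ and invoking minimality yields $E/N$ is $p$-nilpotent; then the preimage $K$ in $E$ of a normal $p$-complement of $E/N$ has Sylow $p$-subgroup $N \le Z(K)$, so Schur--Zassenhaus gives $K = N \times K_{p'}$, and $K_{p'}$ is characteristic in $K \unlhd E$, hence a normal $p$-complement of $E$, a contradiction. Thus $O_p(E) = O_{p'}(E) = 1$, so $F(E) = 1$ and $E$ is non-solvable.

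The remaining non-solvable case is dispatched directly via the $p$-$CAP$ property. Take a minimal normal subgroup $L$ of $G$ contained in $E$; then $L = L_1 \times \cdots \times L_k$ with each $L_i$ non-abelian simple, and $p = \min \pi(E) \le \min \pi(L_i)$ forces $p \mid |L_i|$. After replacing $P$ by a Sylow $p$-subgroup of $E$ containing a Sylow $p$-subgroup of some $L_i$ (legitimate since the $ICPC$-hypothesis is conjugation-invariant), pick $x$ of order $p$ in $L_1 \cap P$. Then $\langle x \rangle$ is an $ICPC$-subgroup of $G$, so $\langle x \rangle \cap [\langle x \rangle, G] \le \langle x \rangle_{pcG}$; against the $pd$-chief factor $L/1$, $\langle x \rangle_{pcG}$ cannot cover (as $|\langle x \rangle_{pcG}| \le p < |L|$), and avoidance combined with $\langle x \rangle_{pcG} \le L$ forces $\langle x \rangle_{pcG} = 1$. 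Hence $\langle x \rangle \cap [\langle x \rangle, G] = 1$. But $[\langle x \rangle, L_1] \le [\langle x \rangle, G] \cap L_1$, which is $L_1$-invariant and thus equals $1$ or $L_1$ by simplicity: the first gives $x \in Z(L_1) = 1$, and the second gives $\langle x \rangle \le L_1 \le [\langle x \rangle, G]$; both are contradictions. The main obstacle I anticipate is the technical descent of the $ICPC$-hypothesis to $G/N$ in the second paragraph, especially the $4$-case when $p=2$ because elements of order $4$ in $P/N$ need not lift to order-$4$ elements of $P$; this should be resolved by analogues of the quotient lemmas for semi-$CAP$-subgroups developed in \cite{GL3}, adapted to the narrower class of $pd$-chief factors.
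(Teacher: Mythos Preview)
Your reduction to $O_{p'}(E)=1$ and your endgame in the case $O_p(E)=O_{p'}(E)=1$ are both fine; the latter is in fact a bit cleaner than the paper's version, which first forces $E$ to be a minimal normal subgroup of $G$ and then runs essentially the same cover/avoid dichotomy on $\langle x\rangle$ against the chief factor $E/1$.

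The genuine gap is in your treatment of the case $O_p(E)\neq 1$. You pick a minimal normal $N\le O_p(E)$ of order $p$, show $N\le Z(E)$, and then want to invoke minimality on $(G/N,E/N)$. But the only descent result available (the paper's Lemma~2.2) requires $(|H|,|N|)=1$, and here $N$ is a $p$-group while the relevant $H=\langle x\rangle$ are $p$-groups as well. There are two separate obstructions, and neither is cosmetic. First, a cyclic subgroup of $P/N$ of order $p$ need not lift to an element of order $p$ in $P$: its preimage can have order $p^2$, for which no $ICPC$ assumption is made (your remark singles out the order-$4$ case, but the order-$p$ case is equally problematic). Second, even when a lift of the correct order exists, there is no reason for $\langle x\rangle_{pcG}N/N$ to be a $p$-$CAP$-subgroup of $G/N$; Lemma~2.1 only gives this when $N\le\langle x\rangle_{pcG}$ or when the orders are coprime, and products of $p$-$CAP$-subgroups need not be $p$-$CAP$. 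Your appeal to ``analogues of the quotient lemmas in \cite{GL3}'' does not repair this: those lemmas face the same lifting obstruction and were not used in that paper to pass modulo a $p$-group either.

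The paper circumvents the issue entirely by never quotienting by a $p$-group. In the case $O_p(E)>1$ it restricts to smaller normal subgroups $L$ of $G$ with $L\le E$ (the hypothesis is inherited trivially since $L\cap P\le P$), pins down a $G$-chief factor $L/O_p(E)$ of order divisible by $p$ with $O_p(E)$ the unique maximal $G$-normal subgroup below $L$, and then works inside a minimal non-$p$-nilpotent subgroup $K$ of $L$, exploiting the Schmidt structure of $K$ (normal Sylow $p$-subgroup $K_p$ of exponent $p$ or $4$, $K_p/\Phi(K_p)$ a $K$-chief factor) to force $\langle x\rangle=\langle x\rangle_{pcG}$ for suitable $x\in K_p\setminus\Phi(K_p)$ and then reach a contradiction via cover/avoid on $L/O_p(E)$. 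That detour through minimal non-$p$-nilpotent subgroups is exactly what replaces the unavailable quotient step you are relying on.
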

\section{Preliminaries}
\begin{lemma}[{{\cite[Lemma 2.1]{GQ}}}]\label{1}
Let $H$ be a $p$-$CAP$ subgroup of $G$ and $N \unlhd G$. Then the following statements are true.
\begin{itemize}
\item[(1)] $N$ is a $p$-$CAP$ subgroup of $G$.
\item[(2)] If $H \geq N$, then $H/N$ is a $p$-$CAP$ subgroup of $G/N$.
\item[(3)] If $(|H|,|N|)=1$, then $HN/N$ is a $p$-$CAP$ subgroup of $G/N$.
\end{itemize}
\end{lemma}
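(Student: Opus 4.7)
The plan is to verify the three parts separately using the chief-factor correspondence between $G$ and $G/N$, together with Dedekind's modular law.

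For part (1), I will invoke the standard fact that any normal subgroup of $G$ covers or avoids every chief factor, not merely $pd$-ones. Given a $pd$-chief factor $K/L$ of $G$, the subgroup $(N \cap K)L/L$ is $G$-invariant and contained in the chief factor $K/L$; by minimality it is either trivial (forcing $N \cap K \leq L$, so $N$ avoids $K/L$) or all of $K/L$ (forcing $K \leq NL$, so $N$ covers $K/L$). Hence $N$ is even a $CAP$-subgroup of $G$, and in particular a $p$-$CAP$-subgroup.

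For part (2), I will use the standard correspondence: every $pd$-chief factor of $G/N$ lifts to a $pd$-chief factor $K/L$ of $G$ with $N \leq L < K$, and $|K/L|$ is preserved so the $pd$-property is intact. Because $H \supseteq N$, both the covering identity $HL = HK$ and the avoidance identity $H \cap K = H \cap L$ descend directly to the quotient $G/N$, so $H/N$ inherits the cover-or-avoid behaviour of $H$ on the corresponding chief factor of $G/N$.

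For part (3), the setup is the same correspondence, but I have to accommodate the fact that $H$ need not contain $N$. Let $K/L$ be a $pd$-chief factor of $G$ with $L \geq N$. If $H$ covers $K/L$, then multiplying $HK = HL$ by $N$ on the right yields $(HN)K = (HN)L$, so $HN/N$ covers the corresponding chief factor in $G/N$. If $H$ avoids $K/L$, then since $N \leq L \leq K$, Dedekind's modular law gives $HN \cap K = (H \cap K)N \leq LN = L$, so $HN/N$ avoids the corresponding chief factor.

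The only step with any genuine content is the avoidance case in part (3), where Dedekind's identity is what allows the condition to be transferred from $H$ to $HN$; the remaining verifications are direct translations through the correspondence theorem and should go through with no obstruction.
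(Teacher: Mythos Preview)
The paper does not supply its own proof of this lemma; it is quoted verbatim from \cite[Lemma~2.1]{GQ} and used as a black box, so there is nothing in the paper to compare your argument against.

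Your proof is correct. Part~(1) is the standard observation that $(N\cap K)L/L$ is a $G$-invariant subgroup of the simple module $K/L$, and parts~(2) and~(3) go through exactly as you describe via the correspondence between chief factors of $G/N$ and chief factors of $G$ lying above $N$. The Dedekind step in the avoidance case of~(3) is sound: since $N\le L\le K$, one has $HN\cap K = N(H\cap K)\le NL = L$.

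One remark worth making: your argument for part~(3) never invokes the hypothesis $(|H|,|N|)=1$. Indeed, once $N\le L$, the modular-law computation needs only $N\le K$, which is automatic. So your proof actually establishes the stronger statement that $HN/N$ is a $p$-$CAP$-subgroup of $G/N$ for \emph{any} normal $N$, with no coprimality assumption. The hypothesis in the cited lemma is therefore superfluous for this conclusion (though it may be present in \cite{GQ} for uniformity with related statements or simply as an artifact of a different proof).
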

\begin{lemma}
Let $G$ be a finite group and $N$ a normal subgroup of $G$. Suppose that $H$ is an $ICPC$-subgroup of $G$, and $(|H|,|N|) = 1$. Then $H N/N$ is an $ICPC$-subgroup of $G/N$.
\end{lemma}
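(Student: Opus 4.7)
The plan is to verify the defining inequality for $HN/N$ directly. The natural candidate for the $p$-$CAP$-subgroup of $G/N$ contained in $HN/N$ is $H_{pcG}N/N$, and then the task is to show that $(HN/N)\cap[HN/N,G/N]$ lies inside it. Lemma~\ref{1}(3) will do the work of turning a $p$-$CAP$-subgroup of $G$ into one of $G/N$.

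First I would compute $[HN/N,G/N]$. Since $[N,G]\leq N$ and $[H,G]\unlhd G$, a short calculation using the identity $[hn,g]=[h,g]^{n}[n,g]$ shows that $[HN,G]\leq[H,G]N$; the reverse inclusion is trivial. Thus $[HN/N,G/N]=[H,G]N/N$. Then, applying the Dedekind modular law with $N\leq[H,G]N$,
\[
(HN/N)\cap[HN/N,G/N]\;=\;N\bigl(H\cap[H,G]N\bigr)/N.
\]

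The key step is to use the coprimality hypothesis to replace $H\cap[H,G]N$ by $H\cap[H,G]$. Passing to the quotient $G/[H,G]$, the image of $H\cap[H,G]N$ lies in the intersection of $H[H,G]/[H,G]$ (whose order divides $|H|$) with $[H,G]N/[H,G]\cong N/(N\cap[H,G])$ (whose order divides $|N|$), so coprimality forces this image to be trivial. Hence $H\cap[H,G]N\leq[H,G]$, and combining with the obvious inclusion in $H$ yields $H\cap[H,G]N=H\cap[H,G]$.

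Feeding this back, together with the assumption $H\cap[H,G]\leq H_{pcG}$, gives
\[
(HN/N)\cap[HN/N,G/N]\;\leq\;H_{pcG}N/N.
\]
Since $H_{pcG}\leq H$ we have $(|H_{pcG}|,|N|)=1$, so Lemma~\ref{1}(3) guarantees that $H_{pcG}N/N$ is a $p$-$CAP$-subgroup of $G/N$, and it is visibly contained in $HN/N$. I expect the main obstacle to be the third paragraph: the small order-theoretic argument trimming $[H,G]N$ down to $[H,G]$ inside $H$ is where coprimality is genuinely used, and without it the lemma would fail.
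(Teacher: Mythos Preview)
Your proof is correct and follows the same overall strategy as the paper: both establish that $(HN/N)\cap[HN/N,G/N]=(H\cap[H,G])N/N\leq H_{pcG}N/N$ and then invoke Lemma~\ref{1}(3). The only difference lies in the intermediate computation. The paper reaches the key identity via an order count, showing
\[
|HN\cap[H,G]|=|H\cap[H,G]|\cdot|N\cap H[H,G]|
\]
and hence $HN\cap[H,G]=(H\cap[H,G])(N\cap H[H,G])$, from which $(HN\cap[H,G])N=(H\cap[H,G])N$ follows. You instead apply the Dedekind modular law to get $HN\cap[H,G]N=N(H\cap[H,G]N)$ and then use a direct coprimality argument in $G/[H,G]$ to trim $H\cap[H,G]N$ down to $H\cap[H,G]$. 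Both routes are short, elementary, and use the hypothesis $(|H|,|N|)=1$ in essentially the same place; yours is arguably a bit more transparent since it avoids the index manipulations.
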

\begin{proof}
Since $(|H|,|N|)=1$, it follows that
$$|HN \cap [H,G]| = \frac{|HN||[H,G]|}{|HN[H,G]|}=\frac{|N \cap H[H,G]||H||[H,G]|}{|H[H,G]|}=|H \cap [H,G]||N \cap H[H,G]|. $$
Hence we have that $HN \cap [H,G] = (H \cap [H, G])(N \cap H[H, G])$, and:
$$H N \cap [H N, G]N = H N \cap [H, G]N = (H N \cap [H, G])N = (H \cap [H, G])N. $$
Thus we conclude that
$$H N/N \cap [H N/N, G/N] = (H N \cap [H N, G]N)/N = (H \cap [H, G])N/N \leq  H_{pcG} N/N. $$
Then lemma \ref{1} (3) yields that $H_{pcG} N/N$ is a $p$-$CAP$ subgroup of $G/N$ which is contained in $HN/N$. Therefore we get that $H N/N \cap [H N/N, G/N] \leq (HN/N)_{pc(G/N)}$. In other words, $H N/N$ is an $ICPC$-subgroup of $G/N$.
\end{proof}
\begin{lemma}[{{\cite[Lemma 4.3]{GS}}}]\label{3}
Let $P$ be a finite $p$-group, and $C$ be a Thompson critical subgroup of $P$. Then the group $D:= \Omega (C)$ has exponent $p$ or $4$ (if $P$ is a non-abelian $2$-group). 
\end{lemma}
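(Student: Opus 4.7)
The plan is to exploit the defining structural property of a Thompson critical subgroup, namely that $C$ has nilpotency class at most $2$ (equivalently $[C,C] \leq Z(C)$), and then apply the standard commutator identities for class-$2$ groups to control the exponent of $D := \Omega(C)$.

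The central tool will be the identity valid in every class-$2$ group: for all $x, y \in C$ and every positive integer $n$,
\[
(xy)^n = x^n y^n [y,x]^{\binom{n}{2}},
\]
together with the bilinearity of commutators in such a group, that is, $[y,x]^n = [y^n, x] = [y, x^n]$.

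For odd $p$, I would first observe that $\binom{p}{2} = p(p-1)/2$ is a multiple of $p$, and that $[y,x]^p = [y^p, x] = 1$ whenever $y^p = 1$. Combining these, for any $x, y \in C$ with $x^p = y^p = 1$ one obtains $(xy)^p = [y,x]^{\binom{p}{2}} = 1$. Hence the set $\{x \in C : x^p = 1\}$ is closed under multiplication, and therefore coincides with $D$, which then has exponent $p$.

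For $p = 2$ the situation is more delicate, and I expect this to be the main technical obstacle: the naive identity does not immediately yield exponent $2$. Here one has $(xy)^2 = x^2 y^2 [y,x]$, so two involutions $x, y$ give $(xy)^2 = [y,x] \in Z(C)$ with $[y,x]^2 = [y^2, x] = 1$, whence $(xy)^4 = 1$. I would then run an induction on the length of a product $g = x_1 \cdots x_n$ of involutions in $C$: writing $h = x_1 \cdots x_{n-1}$ and assuming $h^2 \in Z(C)$ and $h^4 = 1$, the class-$2$ expansion $g^2 = h^2 x_n^2 [x_n,h] = h^2 [x_n,h]$ immediately places $g^2$ in $Z(C)$, and then $g^4 = h^4 [x_n,h]^2 = 1$ because $[x_n,h]^2 = [x_n, h^2] = 1$ by centrality of $h^2$. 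This forces $D$ to have exponent dividing $4$ in the non-abelian $2$-group case and completes the proof.
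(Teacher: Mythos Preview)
The paper does not prove this lemma itself; it is quoted from \cite[Lemma 4.3]{GS} without argument, so there is no in-paper proof to compare against. Your argument is correct and is the standard one: a Thompson critical subgroup $C$ satisfies $[C,C]\leq Z(C)$, the commutator map is then bilinear, and the class-$2$ identity $(xy)^n=x^ny^n[y,x]^{\binom{n}{2}}$ forces $\Omega_1(C)$ to have exponent $p$ for odd $p$ and exponent dividing $4$ for $p=2$, exactly as you compute.

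One remark worth making explicit: in the cited source (and in the way the lemma is used later in this paper, where elements of order $4$ are drawn from $\Omega(C)$), the symbol $\Omega(C)$ for $p=2$ is taken to mean $\Omega_2(C)=\langle x\in C: x^4=1\rangle$, not $\Omega_1(C)$. Your induction as written treats only products of involutions. The extension is immediate once you also invoke the companion property $\Phi(C)\leq Z(C)$ of a Thompson critical subgroup, which gives $C^2\leq Z(C)$: if $h^4=x^4=1$ then $(hx)^2=h^2x^2[x,h]\in Z(C)$, and $(hx)^4=h^4x^4[x,h]^2$ with $[x,h]^2=[x^2,h]=1$ since $x^2\in Z(C)$. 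So the same induction runs for generators of order dividing $4$, and you should add this line so the argument covers the intended convention.
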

\begin{lemma}[{{\cite[Lemma 2.12]{CG}}}]\label{4}
Let $\mathfrak{F}$ be a solvably saturated formation, $P$ be a normal $p$-subgroup of $G$, and $C$ be a Thompson critical subgroup of $P$. Suppose that either $\Omega (C) \leq Z_{\mathfrak{F}} (G)$ or $P/\Phi (P) \leq Z_{\mathfrak{F}} (G/\Phi(P))$ holds, then $P \leq  Z_{\mathfrak{F}} (G)$. 
\end{lemma}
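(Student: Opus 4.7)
The plan is induction on $|P|$, handling the two hypotheses separately, and aiming to show that every chief factor $L/K$ of $G$ with $L \le P$ is $\mathfrak{F}$-central.

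If $P/\Phi(P) \le Z_{\mathfrak{F}}(G/\Phi(P))$, I would use the classical inclusion $\Phi(P) \le \Phi(G)$, valid because $P$ is a normal $p$-subgroup of $G$, so that $\Phi(P)$ sits inside the generalised Frattini subgroup $\Phi^{\ast}(G)$. Solvable saturation of $\mathfrak{F}$ can then be invoked to lift $\mathfrak{F}$-centrality from the chief factors of $G$ above $\Phi(P)$ (inside $P$) to the chief factors below $\Phi(P)$. In a minimal counterexample one would derive a contradiction by quotienting by a minimal normal subgroup of $G$ contained in $\Phi(P)$ and comparing with the hypothesis applied to $G/\Phi(P)$.

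If instead $\Omega(C) \le Z_{\mathfrak{F}}(G)$, the Thompson-critical rigidity is the essential tool: every $p'$-automorphism of $P$ that centralises $\Omega(C)$ is trivial on $P$. Combined with $\Omega(C) \le Z_{\mathfrak{F}}(G)$, this tightly controls the image of $G$ in $\mathrm{Aut}(P)$. One then climbs the chain $\Omega(C) \le C \le P$ using the Thompson properties $[P,C] \le Z(C)$ and $C_P(C) = Z(C)$ to transfer $\mathfrak{F}$-centrality from $\Omega(C)$ first to $C$ and then to $P$, exploiting coprime action to handle the $p'$-part of $G/C_G(P)$.

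The main obstacle I anticipate is the second case, where one must translate a statement about automorphisms of $P$ (Thompson-critical rigidity) into a statement about $G$-chief factors inside $P$ (the definition of the $\mathfrak{F}$-hypercenter). A convenient framework is to pass to the semidirect product $P \rtimes (G/C_G(P))$, so that the question reduces to checking $\mathfrak{F}$-centrality of chief factors in a smaller, primitive-like group where the Thompson condition directly forces the needed structure.
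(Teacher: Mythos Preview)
The paper does not supply its own proof of this lemma: it is quoted verbatim as \cite[Lemma 2.12]{CG} and used as a black box, so there is nothing in the manuscript to compare your argument against.

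As for the substance of your sketch, the strategy is broadly the one used in the cited source. In the $P/\Phi(P)$ case your use of $\Phi(P)\le\Phi(G)$ together with the solvable saturation of $\mathfrak{F}$ is exactly the standard reduction. In the $\Omega(C)$ case you have correctly identified the decisive property of a Thompson critical subgroup (faithful action of $p'$-automorphisms on $C$, hence on $\Omega(C)$), but be aware that the passage from ``$\Omega(C)\le Z_{\mathfrak{F}}(G)$'' to ``each $G$-chief factor inside $P$ is $\mathfrak{F}$-central'' is not an automatic consequence of that faithfulness alone: one needs an explicit argument (in \cite{CG} this goes via the local function defining $\mathfrak{F}$ as a Baer-local formation, showing that $G/C_G(H/K)$ lies in the appropriate class for every chief factor $H/K$ below $P$). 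Your proposed reduction to $P\rtimes(G/C_G(P))$ is a workable framework for this, but the actual verification that the $\mathfrak{F}$-centrality condition transfers still has to be written out; as it stands the sketch names the right ingredients without executing the key step.
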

\begin{lemma}[{{\cite[Theorem B]{SK}}}]\label{5}
Let $\mathfrak{F}$ be any formation and $G$ a group. If $E \unlhd G$  and $F^{*} (E) \leq Z_{\mathfrak{F}} (G)$, then $E \leq Z_{\mathfrak{F}} (G)$.
\end{lemma}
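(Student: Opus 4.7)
I would proceed by induction on $|E|$, with the defining property of the generalized Fitting subgroup --- $C_E(F^*(E)) \leq F^*(E)$ --- as the essential tool. Set $F := F^*(E)$ and $Z := Z_{\mathfrak{F}}(G)$; by hypothesis $F \leq Z$, and $F \unlhd G$ since $F$ is characteristic in $E$. Choose a counterexample $(G, E)$ with $|G| + |E|$ minimal. The case $F = E$ is vacuous, so we may assume $F < E$.

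The heart of the argument is the analysis of $C := C_G(F)$. The inclusion $C_E(F) \leq F$ gives $E \cap C \leq F$, so the map $eF \mapsto eC$ realises $E/F$ as a normal subgroup of $G/C$. Since every $G$-chief factor $H/K$ lying in $F$ is $\mathfrak{F}$-central --- meaning $(H/K) \rtimes (G/C_G(H/K)) \in \mathfrak{F}$ --- and since $C$ equals the intersection of the centralizers $C_G(H/K)$ taken over a $G$-chief series through $F$, the closure of $\mathfrak{F}$ under subdirect products and homomorphic images constrains $G/C$ in a formation-theoretic way. I would then transfer this constraint through the embedding $E/F \hookrightarrow G/C$ to show that each $G$-chief factor introduced between $F$ and $E$ is again $\mathfrak{F}$-central, whence $E \leq Z$, contradicting the choice of $(G,E)$.

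The main obstacle is precisely this transfer step. Because $\mathfrak{F}$ is only assumed to be a formation --- not necessarily saturated or solvably saturated --- one cannot appeal to local satellites or the local formation machinery, and so the $\mathfrak{F}$-centrality of the chief factors above $F$ must be read off purely from the formation axioms combined with the faithful action of $E/F$ on $F$. Managing the passage from \emph{$\mathfrak{F}$-centrality of the factors inside $F$} to \emph{$\mathfrak{F}$-centrality of the factors above $F$}, without losing control of centralizers when $H/K$ is non-abelian (as can happen within the layer $E(E)$), is the delicate technical step that I expect will absorb most of the work; once it is in hand, the induction closes immediately.
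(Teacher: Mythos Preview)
The paper does not prove this lemma at all; it merely quotes it as Theorem~B of Skiba~\cite{SK}. So there is no ``paper's own proof'' to compare against, and your proposal must be judged on its own merits.

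On that score, the outline contains two concrete errors in addition to the gap you yourself flag. First, the assertion that $C=C_G(F)$ \emph{equals} the intersection $\bigcap C_G(H/K)$ over a $G$-chief series of $F$ is false: one always has $C_G(F)\le\bigcap C_G(H/K)$, but the reverse inclusion fails whenever some element stabilises the series without centralising $F$ (the quotient is a stability group, typically non-trivial). What one \emph{can} extract from $\mathfrak{F}$-centrality of the factors below $F$ is that $G/\bigcap C_G(H/K)\in\mathfrak{F}$ (since each $G/C_G(H/K)$ is a quotient of the corresponding semidirect product), but this is a statement about $D:=\bigcap C_G(H/K)$, not about $C$. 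Second, the map $eF\mapsto eC$ is not well-defined: it would require $F\le C=C_G(F)$, i.e.\ $F$ abelian, which is certainly not assumed. From $E\cap C=C_E(F)=Z(F)\le F$ you get an embedding of $E/Z(F)$ into $G/C$, and $E/F$ is only a \emph{quotient} of that image, not a subgroup of $G/C$.

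Most importantly, the ``transfer step'' you single out as the main obstacle is genuinely the entire content of the theorem, and your proposal gives no mechanism for it. Knowing $G/D\in\mathfrak{F}$ does not by itself force $(L/M)\rtimes(G/C_G(L/M))\in\mathfrak{F}$ for chief factors $L/M$ between $F$ and $E$; one needs to relate $C_G(L/M)$ back to $D$ (or to the individual $C_G(H/K)$), and that is exactly where Skiba's argument does its real work, exploiting the faithful action of $E/Z(F)$ on $F$ together with a careful analysis separating the $F(E)$- and $E(E)$-parts. As written, the proposal is a statement of intent rather than a proof sketch: the inductive frame and the invocation of $C_E(F^*(E))\le F^*(E)$ are correct starting points, but the substantive argument is missing.
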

\begin{lemma}[{{\cite[Lemma 3.3]{GS2}}}]\label{6}
Let $\mathfrak{F}$ be a solvably saturated formation containing $\mathfrak{U}$ and $N$ a normal subgroup of $G$ such that $G/N \in  \mathfrak{F}$. If $N \leq Z_{\mathfrak{F}} (G)$, then $G\in  \mathfrak{F}$. 
\end{lemma}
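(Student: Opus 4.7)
The plan is induction on $|N|$. The case $N=1$ is immediate. For the inductive step, choose a minimal normal subgroup $L$ of $G$ with $L\le N$. Because $L\le Z_{\mathfrak{F}}(G)$, the chief factor $L/1$ of $G$ is $\mathfrak{F}$-central. Passing to $G/L$, the standard property $Z_{\mathfrak{F}}(G)/L\le Z_{\mathfrak{F}}(G/L)$ yields $N/L\le Z_{\mathfrak{F}}(G/L)$, while $(G/L)/(N/L)\cong G/N\in\mathfrak{F}$. The induction hypothesis then delivers $G/L\in\mathfrak{F}$, and therefore $G^{\mathfrak{F}}\le L$.

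By the minimality of $L$, either $G^{\mathfrak{F}}=1$ and $G\in\mathfrak{F}$ (done), or $G^{\mathfrak{F}}=L$. In the latter case I would derive a contradiction as follows. Since $\mathfrak{F}$ is solvably saturated, it is Baer-local and hence admits a local satellite (formation function) $f$ such that $H\in\mathfrak{F}$ iff, for every chief factor $U/V$ of $H$, one has $H/C_H(U/V)\in f(p)$ for each $p\mid|U/V|$ (plus the analogous condition $H/C_H(U/V)\in f(E)$ for every composition factor $E$ of a non-abelian chief factor). Now $G/L\in\mathfrak{F}$ supplies these local conditions for every chief factor of $G$ lying strictly above $L$, while $L\le Z_{\mathfrak{F}}(G)$ supplies them at the bottom for $L/1$. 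Thus every chief factor of $G$ meets the criterion, so $G\in\mathfrak{F}$, contradicting $G^{\mathfrak{F}}=L>1$.

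The main obstacle is this last step, which is where \emph{solvably saturated} enters essentially: it rests on the Baer-local characterization of $\mathfrak{F}$ and on the identification of the $\mathfrak{F}$-central condition on a chief factor $H/K$ with the local conditions on the quotient $G/C_G(H/K)$. A subsidiary point deserving care is the inclusion $Z_{\mathfrak{F}}(G)/L\le Z_{\mathfrak{F}}(G/L)$, which is standard but requires checking that each $G/L$-chief factor appearing in $Z_{\mathfrak{F}}(G)/L$ remains $\mathfrak{F}$-central after the quotient is taken; once that piece is in place and the local satellite of $\mathfrak{F}$ is fixed, the remaining bookkeeping is routine.
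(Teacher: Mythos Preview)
The paper does not supply a proof of this lemma; it is quoted from \cite[Lemma~3.3]{GS2} and used as a black box in the proof of Corollary~\ref{10001}. There is therefore nothing in the paper to compare your attempt against.

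Your sketch is nonetheless a correct outline. The inductive reduction to a minimal normal $L\le N$ and the passage to $G/L$ via $Z_{\mathfrak{F}}(G)/L\le Z_{\mathfrak{F}}(G/L)$ are standard and valid. The heart of the matter is the last step, and you have located it accurately: one needs that for a solvably saturated (equivalently, Baer-local) formation $\mathfrak{F}$, membership $G\in\mathfrak{F}$ is equivalent to every $G$-chief factor being $\mathfrak{F}$-central, and that $\mathfrak{F}$-centrality of a chief factor $H/K$ translates into the local condition $G/C_G(H/K)\in f(p)$ (respectively $f(E)$) for a suitably chosen Baer function $f$ defining $\mathfrak{F}$. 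Both facts are standard (see, e.g., \cite{DH} or \cite{GW}), so once you cite them explicitly your argument is complete. One minor redundancy: the dichotomy $G^{\mathfrak{F}}=1$ versus $G^{\mathfrak{F}}=L$ is superfluous, since your final paragraph already derives $G\in\mathfrak{F}$ directly from $G/L\in\mathfrak{F}$ together with the $\mathfrak{F}$-centrality of $L/1$; you may as well run that argument uniformly once $G/L\in\mathfrak{F}$ is in hand.
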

\begin{lemma}[{{\cite[Lemma 2.8]{WW}}}]\label{7}
Let $M$ be a maximal subgroup of $G$ and $Q$ a normal $p$-subgroup of
$G$ such that $G = MQ$, where $p$ is a prime. Then $Q \cap M \unlhd G$.
\end{lemma}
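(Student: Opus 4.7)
The plan is to analyze the normalizer $N_G(Q \cap M)$ and exploit both the maximality of $M$ in $G$ and the standard normalizer-growth property of $p$-groups. The intuition is that $M$ automatically normalizes $Q \cap M$ (because $Q$ is normal in $G$, so conjugation by $M$ stabilizes both $Q$ and $M$), and maximality of $M$ leaves essentially only two possibilities for $N_G(Q \cap M)$.

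First I would observe that $M \le N_G(Q \cap M)$. Since $M$ is a maximal subgroup of $G$, the only overgroups of $M$ in $G$ are $M$ and $G$ itself, so $N_G(Q \cap M)$ equals either $G$ or $M$. In the first case $Q \cap M \unlhd G$ and there is nothing more to prove.

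In the second case $N_G(Q \cap M) = M$, and I would intersect with $Q$ to obtain
\[
N_Q(Q \cap M) \;=\; Q \cap N_G(Q \cap M) \;=\; Q \cap M.
\]
At this point the key fact to invoke is that in a finite $p$-group every proper subgroup is strictly contained in its normalizer. Since $Q \cap M$ coincides with its normalizer in the $p$-group $Q$, this forces $Q \cap M = Q$, i.e.\ $Q \le M$. But then $Q \cap M = Q$, which is normal in $G$ by hypothesis.

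There is no real obstacle here: the only nontrivial ingredient is the normalizer-growth property of finite $p$-groups, and the structure of the argument is dictated by the dichotomy furnished by the maximality of $M$. Care is needed only to phrase the two cases cleanly and to note that the second case actually collapses back to $Q \cap M$ being normal in $G$.
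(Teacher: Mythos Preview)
Your argument is correct. The observation $M \le N_G(Q \cap M)$ follows since $Q \unlhd G$, the dichotomy from maximality of $M$ is clean, and the normalizer-growth property of finite $p$-groups handles the residual case exactly as you describe. (One could even sharpen the second case: if $Q \le M$ then $G = MQ = M$, contradicting that a maximal subgroup is proper; so in fact that branch is vacuous. But your phrasing is also valid.)

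There is no comparison to make with the paper's proof: the paper does not prove this lemma at all but merely cites it as \cite[Lemma~2.8]{WW}. Your short self-contained argument is a perfectly good substitute for that citation.
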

\section{Proofs of the main theorems} 
\begin{proof}[Proof of Theorem \ref{3.1}]
Suppose that the theorem is false, and let $(G,P)$ be a pair of counterexample such that $|G|+|P|$ is of minimal order. Then clearly $G \neq Z_{\mathfrak{U}} (G)$. Let $K \unlhd G$ such that $P/K$ is a chief factor of $G$. Then the pair $(G,K)$ satisfies the hypothesis, and so $K \leq Z_{\mathfrak{U}} (G)$ by the minimal choice of $G$. Let $L$ be any normal subgroup of $G$ such that $L <P$. By our argument above, we have that $L \leq Z_{\mathfrak{U}} (G)$. If $L \not\leq K$, then $KL =P$, which implies that $P \leq Z_{\mathfrak{U}} (G)$, a contradiction. Hence we get that $L \leq K$, and so $K$ is the unique normal subgroup of $G$ such that $P/K$ is a chief factor of $G$. If $|P/K| =p$, it follows that $P/K$ is $\mathfrak{U}$-central in $G$. By generalised Jordan-Holder Theorem, every chief factor below $P$ is $\mathfrak{U}$-central in $G$. Thus we conclude that $P \leq Z_{\mathfrak{U}} (G)$, a contradiction. Hence we have $|P/K| >p$. Now let $C$ be a Thompson critical subgroup of $P$. If $\Omega (C) <P$, it yields that $\Omega (C) \leq K \leq Z_{\mathfrak{U}} (G)$. By Lemma \ref{4}, we conclude that $P \leq Z_{\mathfrak{U}} (G)$, a contradiction. Therefore we get that $\Omega (C) = P$, which indicates that $exp(P) =p$ or $4$ (if $P$ is a non-abelian $2$-group) by Lemma \ref{3}. Let $G_p$ be a Sylow $p$-subgroup of $G$ containing $P$. It follows from {{\cite[Chapter III, 3.1.11]{KS}}} that $P/K \cap Z(G_p /K) \neq 1$. Now we may assume that $R/K \leq P/K \cap Z(G_p /K)$ with $|R/K| =p$. Let $x \in R\setminus K$. Then we get that $R = K \langle x \rangle$, and $o(x) = p$ or $4$ by $R \leq \Omega (C) = P$.  

Suppose firstly that $\langle x \rangle$ is an $ICPC$-subgroup of $G$. By our hypothesis, we assert that $\langle x \rangle \cap [ \langle x \rangle,G] \leq \langle x \rangle_{pcG}$. Assume that $[\langle x \rangle ,G] < P$, it follows from $[\langle x \rangle ,G] \unlhd G$ that $[\langle x \rangle ,G] \leq K$. Thus we have that $R = \langle x \rangle K =[\langle x \rangle ,G]\cdot  \langle x \rangle K = \langle x \rangle ^G K \unlhd G$. Hence we get that $R = P$, which indicates that $|P/K| = |R/K| = p$, a contradiction. Therefore we assert that $[\langle x \rangle,G] =P$, which implies that $\langle x \rangle \cap [ \langle x \rangle,G]=\langle x \rangle = \langle x \rangle_{pcG}$. Since $p \,|\,|P/K|$, we get that $\langle x \rangle$ covers or avoids $P/K$. If $\langle x \rangle$ covers $P/K$, it follows directly that $P = P \langle x \rangle = K \langle x \rangle = R$. Hence we get that $|P/K| = |R/K| =p$, a contradiction. Thus we have that $\langle x \rangle \cap K = \langle x \rangle \cap P$, which yields that $\langle x \rangle \leq K$, a contradiction to the choice of $x$. 

Hence we conclude that there exists a supersolvable supplement $A$ in $G$ such that $G = \langle x \rangle A = PA$. Since $G \neq  Z_{\mathfrak{U}} (G)$, it follows that $A$ is a proper subgroup of $G$. Thus there exists a maximal subgroup $M$ of $G$ such that $A \leq M$. Hence we get that $G =PM$. By Lemma \ref{7}, $P \cap M \unlhd G$. Clearly $P \cap M <M$, thus we have $P \cap M \leq K$. It yields that
$$P =P \cap \langle x \rangle A = \langle x \rangle (P \cap A) \leq\langle x \rangle (P \cap M) \leq \langle x \rangle K =R.  $$ 
Hence $P=R$, which implies that $|P/K| = |R/K| =p$, a contradiction. Therefore no such counterexample of $(G,P)$ exists and we are done. 
\end{proof}

\begin{proof}[Proof of Corollary \ref{10001}]
Let $P$ be a Sylow $p$-subgroup of $F(N)$, where $p \in \pi (F(N))$. It follows directly that $P \unlhd G$. Suppose that $P$ is cyclic, then every subgroup of $P$ is characteristic in $P$, hence normal in $G$. Let $K/L$ be a chief factor of $G$ below $P$. It yields that $K/L$ is of order $p$ since every subgroup of $P$ is normal in $G$. Therefore we have $P \leq Z_{\mathfrak{U}} (G)$. Suppose that $P$ is not cyclic. Then every cyclic subgroup of $P$ of order $p$ or $4$ (if $P$ is a non-abelian $2$-group) is an $ICPC$-subgroup of $G$. By Theorem \ref{3.1},  we conclude that $P \leq Z_{\mathfrak{U}} (G)$. Thus we get that $F(N) \leq  Z_{\mathfrak{U}} (G)$, and so $F^{*} (N) \leq  Z_{\mathfrak{U}} (G)$ since $F^{*} (N) = F(N)$ by the solvability of $N$. By Lemma \ref{5}, we have that $N \leq Z_{\mathfrak{U}} (G)$. By Lemma \ref{6}, we conclude that $G \in \mathfrak{U} \subseteq \mathfrak{F}$ and we are done.
\end{proof}

\begin{proof}[Proof of Theorem \ref{10002}]
Suppose that the theorem is false, and let $(G,E)$ be a pair of counterexample such that $|G| +|E|$ is minimal. By the Burnside's Theorem, $|P| \geq p^2$. 

Now we claim that $O_{p'} (E) =1$. Assume that $O_{p'} (E)>1$, then $E /O_{p'} (E)$ is a normal subgroup of $G/O_{p'} (E)$ since $O_{p'} (E) \unlhd G$. It is clear that $P O_{p'} (E)/O_{p'} (E)$ is a Sylow $p$-subgroup of $G/O_{p'} (E)$, where $p = \min \pi (E/O_{p'} (E))$. Since $(|O_{p'} (E)|,p)=1$, every cyclic subgroup of $P O_{p'} (E)/O_{p'} (E)$ of order $p$ or $4$ (if $P$ is a non-abelian $2$-group) can be written as $\langle x \rangle O_{p'} (E)/O_{p'} (E)$, where $o(x) =p$ or $4$ (if $P$ is a non-abelian $2$-group), and $\langle x \rangle \leq P$ by Sylow Theorem. By our hypothesis, $\langle x \rangle$ is an $ICPC$-subgroup of $G$ for any such $x$. Again, we assert by Lemma \ref{1} (3) and $(|O_{p'} (E)|,p)=1$ that $\langle x \rangle O_{p'} (E) /O_{p'} (E)$ is an $ICPC$-subgroup of $G/O_{p'} (E)$. Hence the pair $(G/O_{p'} (E),E/O_{p'} (E))$ satisfies the hypothesis, and we conclude from the minimal choice of $(G,E)$ and $O_{p'} (E)>1$ that $E/ O_{p'} (E)$ is $p$-nilpotent. Therefore, $E$ is $p$-nilpotent, a contradiction. Hence we get that $O_{p'} (E) =1$.

Assume firstly that $O_p (E) >1$. It follows from $O_p (E) \leq P$ that all cyclic subgroups of $O_p (E)$ of order $p$ and $4$ (if $P$ is a non-abelian $2$-group) are $ICPC$-subgroups of $G$. Hence the pair $(G,O_p (E))$ satisfies the hypothesis of  Theorem \ref{3.1}, which indicates that $O_p (E) \leq Z_{\mathfrak{U}} (G)$. Moreover, $O_p(E) \leq Z_{\infty}(E)$ and so $O_p(E) < P$. Now let $L$ be a normal subgroup of $G$ such that $O_p (E) <L \leq E$, where $L/O_p (E)$ is a chief factor of $G$. Assume that $p \nmid |L/O_p (E)|$, then $O_p (E) \in {\rm Syl}_p (L)$, which implies that $L <E$. Since $L \cap P$ is a Sylow $p$-subgroup of $L$, it follows from every cyclic subgroup of $L \cap P$ of order $p$ or $4$ (if $P$ is a non-abelian $2$-group) is an $ICPC$-subgroup of $G$ that the pair $(G,L)$ satisfies the hypothesis. Hence $L $ is $p$-nilpotent by the minimal choice of $(G,E)$. Since $O_{p'} (E)=1$, it follows that $L$ is a $p$-subgroup, which yields that $L \leq  O_p (E)$, a contradiction to $L > O_p (E)$. Therefore we get that $p \,|\,|L/O_p (E)|$. Assume that there exists a normal subgroup $R$ of $G$  such that $R \neq O_p (E)$, and $L/R$ is a chief factor of $G$. Using the same method above, we conclude that $(G,R)$ satisfies our hypothesis. Thus by the minimal choice of $(G,E)$, we have that $R$ is $p$-nilpotent. It follows directly from $O_{p'} (E) =1$ that $R \leq O_p (E)$. Since $L/R$ is a chief factor of $G$, we get that $R = O_p (E)$, a contradiction. Hence $O_p (E)$ is the unique normal subgroup of $G$ such that $L/O_p (E)$ is a chief factor of $G$. Clearly we have that $O_p (E) \leq Z_{\infty} (E) \cap L \leq Z_{\infty} (L)$. If $L$ is $p$-nilpotent, then $L$ is a $p$-group, which implies that $L \leq O_p (E)$, a contradiction. Therefore we conclude that $L$ is not $p$-nilpotent. Hence there exists a minimal non-$p$-nilpotent subgroup $K$ of $L$. Without loss of generality, we may assume that there exists a Sylow $p$-subgroup of $K$, which is contained in $P$. By {{\cite[Chapter IV, Proposition 5.4]{H1}}} and {{\cite[Chapter VII, Theorem 6.18]{DH}}}, $K$ has the following structure: (1) $K = K_p \rtimes K_q$, where $K_p$ is the normal Sylow $p$-subgroup of $K$, and $K_q$ is a cyclic Sylow $q$-subgroup of $K$; (2) $K_p = K^{\mathfrak{N}}$; (3) $\Phi (K_p) \leq Z(K)$; (4) $K_p / \Phi (K_p)$ is a chief factor of $K$; (5) $exp \,(K_p) =p$ or $4$ (if $p=2$).

Now let $x \in K_p \setminus \Phi (K_p)$, then $o(x) = p$ or $4$ (if $p=2$). Suppose that $[\langle x \rangle,K]=1$, then $\langle x \rangle \leq Z(K)$. Since $\langle x \rangle \not\leq \Phi (K_p)$, we conclude that $\langle x \rangle \Phi (K_p) = K_p$, which yields that $\langle x \rangle = K_p$. By the minimal choice of $p$, we get from {{\cite[Chapter IV, Theorem 2.8]{H1}}} that $K$ is $p$-nilpotent, a contradiction. Hence we get that $[\langle x \rangle, K]>1$. By the normality of $K_p$, we have $[\langle x \rangle,K] \leq K_p$. Suppose that $[\langle x \rangle,K]=K_p$. Since $\langle x \rangle \leq K_p \leq P$, it follows from $\langle x \rangle$ is an $ICPC$-subgroup of $G$ that $\langle x \rangle = \langle x \rangle \cap [\langle x \rangle,K] \leq \langle x \rangle \cap [\langle x \rangle,G] \leq \langle x \rangle_{pcG}$. Therefore we have $\langle x \rangle = \langle x \rangle_{pcG}$. Since $p \,|\,|L/O_p (E)|$, it indicates that $\langle x \rangle$ covers or avoids $L/O_p (E)$. The former one  implies that $L = O_p (E) \langle x \rangle$, which yields that $L$ is a $p$-group, a contradiction. Hence $\langle x \rangle$ avoids $L/O_p (E)$, and so $\langle x \rangle \leq O_p (E)$. By {{\cite[Chapter 1, proposition 1.9]{GW}}}, we get that $\Phi (K) = Z_{\infty} (K)$. Therefore we have that $\langle x \rangle \leq K_p \cap Z_{\infty} (E) \cap K \leq K_p \cap Z_{\infty} (K)$. Clearly $K_p \not\leq \Phi (K)$, thus we have $\Phi (K_p) \leq K_p \cap \Phi (K) < K_p$. Since $K_p /\Phi (K_p)$ is a chief factor of $G$, it indicates that $\Phi (K_p) = K_p \cap \Phi (K)$. Therefore $\langle x \rangle \leq \Phi (K_p)$, a contradiction. Hence we conclude that $1 <[\langle x \rangle,K] <K_p$. 

Suppose that $K_p \cap [\langle x \rangle, G] \leq \Phi (K_p)$, then $[\langle x \rangle, K] \leq \Phi (K_p)$. It follows from (4) that:
$$\langle x \rangle \Phi (K_p)  = \langle x \rangle [\langle x \rangle,K]\Phi (K_p)  = \langle x \rangle ^K\Phi (K_p)  = K_p. $$
Hence we get that $K_p = \langle x \rangle$, which implies again from {{\cite[Chapter 1, proposition 1.9]{GW}}} that $K$ is $p$-nilpotent, a contradiction. Hence we have $1<K_p \cap [\langle x \rangle,G] \not\leq \Phi (K_p)$. Since $K_p \cap [\langle x \rangle,G] \unlhd K$, it follows that $K_p \cap [\langle x \rangle,G] = K_p$, i.e. $K_p \leq [\langle x \rangle,G]$. By our hypothesis, $\langle x \rangle =\langle x \rangle \cap  [\langle x \rangle,G]=\langle x \rangle_{pcG}$. Since $p\,|\,|L/O_p (E)|$, by a similar argument as above, we deduce a contradiction as well. Hence we finally conclude that $O_p (E)=1$.

Let $N$ be a minimal normal subgroup of $G$ such that $N \leq E$. Clearly $N \cap P$ is a Sylow $p$-subgroup of $N$, therefore every cyclic subgroup of $P \cap N$ of order $p$ or $4$ (if $P$ is a non-abelian $2$-group) is an $ICPC$-subgroup of $G$. Hence $(G,N)$ satisfies the hypothesis. If $N <E$, it follows from the minimal choice of $(G,E)$ that $N$ is $p$-nilpotent. It indicates from $O_{p'} (E) =1$ that $N$ is a $p$-group. Thus we conclude that $N \leq O_p (E) =1$, a contradiction. It implies that $N=E$. Now let $\langle x \rangle$ be a cyclic subgroup of $P$ of order $p$ or $4$ (if $P$ is a non-abelian $2$-group). By our hypothesis, $\langle x \rangle$ is an $ICPC$-subgroup of $G$. Hence $\langle x \rangle \cap [\langle x \rangle,G] \leq \langle x \rangle_{pcG}$. Clearly $1 <[\langle x \rangle, G] \unlhd G$, and $\langle x \rangle [\langle x \rangle, G] = \langle x \rangle^G \unlhd E$, therefore we have $[\langle x \rangle,G] = E$. Hence $\langle x \rangle \cap [\langle x \rangle,G]=\langle x \rangle \leq \langle x \rangle_{pcG}$, which yields that $\langle x \rangle =\langle x \rangle_{pcG}$. Since $p\,|\,|E|$, we obtain that $\langle x \rangle$ either covers or avoids $E/1$ as a chief factor of $G$. The former case suggests that $E = \langle x \rangle$, which is absurd by {{\cite[Chapter 1, proposition 1.9]{GW}}}. The latter case suggests that  $E \cap \langle x \rangle = 1$, a contradiction as well. Hence no such counterexample of $(G,E)$ exists, and our proof is complete.
\end{proof}


\begin{thebibliography}{0}
\bibitem{CG} X. Chen, W. Guo, A. N. Skiba, Some conditions under which a finite group belongs to a Baer-local formation, {\it Comm. Algebra}, \textbf{42} (2014) 4188-4203.
\bibitem{DH} K. Doerk , T. Hawkes, Finite Soluble Groups. In: De Gruyter Expositions in Mathematics, vol. 4. Berlin-New York:
Walter de Gruyter, 1992.
\bibitem{GL} Y. Gao, X. Li, The influence of $IC \Phi$-subgroups on the structure of finite groups, {\it Acta Math. Hungar.} \textbf{163} (2021) 29–36.
\bibitem{GL2} Y. Gao, X. Li, The finite groups with $IC$-property of subgroups, {\it Comm. Algebra} \textbf{50} (2021) 2139-2148.
\bibitem{GL3} Y. Gao, X. Li, The IC-property of primary subgroups,  {\it Indian J. Pure Appl. Math.} \textbf{54} (2023) 110–116.
\bibitem{GQ} Z. Gao, S. Qiao, H. Shi, L. Miao, On $p$-$CAP$-subgroups of finite groups, {\it Front. Math. China} \textbf{ 15} (2020), 915–921.
\bibitem{GW} W. Guo, Structure Theory for Canonical Classes of Finite Groups, Berlin, Springer-Verlag, 2015.
\bibitem{GS2} W. Guo, A. N. Skiba, On $\mathfrak{F} \Phi ^{*}$-hypercentral subgroups of finite groups. {\it J. Algebra} \textbf{372} (2012) 275–292.
\bibitem{GS} W. Guo, A. N. Skiba, Finite groups with generalized Ore supplement conditions for primary subgroups, {\it J. Algebra} \textbf{432} (2015) 205-227.
\bibitem{H1} B. Huppert, Endliche Gruppen I. In: Grund Math Wiss, vol. 134. Berlin-Heidelberg-New York, Springer-Verlag, 1967.
\bibitem{KA} J. Kaspczyk, On $p$-nilpotency and minimal subgroups of finite groups, {\it Acta Math. Hungar.} \textbf{165} (2021) 355-359.
\bibitem{KA2} J. Kaspczyk, On finite groups with given $IC \Phi$-subgroups, {\it J. Algebra Appl.} (2024) 2450016.
\bibitem{KS} H. Kurzweil, B. Stellmacher, The Theory of
Finite Groups, Springer, New York, 2004.
\bibitem{SK}  A. N. Skiba, On two questions of L. A. Shemetkov concerning hypercyclically embedded subgroups of finite groups. {\it J. Group Theory} \textbf{13} (2010) 841–850.
\bibitem{WW} Y. Wang, H. Wei, Y. Li, A generalization of Kramer’s theorem and its applications. {\it Bull. Austral.
Math. Soc.} \textbf{65} (2002) 467–475.
\end{thebibliography}
\end{document}